\documentclass[12pt, reqno]{amsart}
\usepackage[centertags]{amsmath}
\usepackage{amsfonts}
\usepackage{amssymb}
\usepackage[latin5]{inputenc}
\usepackage{amsthm}
\usepackage{mathrsfs}
\usepackage{upgreek}
\usepackage{amsopn}
\usepackage{amscd}
\newtheorem{theorem}{Theorem}[section]
\newtheorem{lemma}[theorem]{Lemma}
\newtheorem{definition}[theorem]{Definition}
\newtheorem{cor}[theorem]{Corollary}

\theoremstyle{definition}

\theoremstyle{remark}

\numberwithin{equation}{section}
\newcommand{\ou}{\textnormal{ou}\, }
\newcommand{\ic}{\textnormal{int}\, }
\renewcommand{\leq}{\leqslant}
\renewcommand{\geq}{\geqslant}

\renewcommand{\emptyset}{\varnothing}
\renewcommand{\epsilon}{\varepsilon}
\newcommand{\included}{\subseteq}      

\newcommand{\stnd}[1]{\mathbb{#1}}
\newcommand{\N}{\stnd{N}}         
\newcommand{\R}{\stnd{R}}         

\oddsidemargin 5pt \evensidemargin 5pt \marginparwidth 68pt
\marginparsep 10pt \topmargin 10pt \headheight 10pt
\headsep 15pt \footskip 35pt
\textheight = 625pt  \textwidth 440pt
\columnsep 10cm \columnseprule 10cm
\begin{document}
{\setlength{\baselineskip}%
                        {1.3\baselineskip}
                      
\title{Order-Unit-Metric Spaces}
\author{mert \c{c}a\u{g}lar}
\address{(mert \c{c}a\u{g}lar) Department of Mathematics and Computer Science, \.{I}stanbul K\"{u}lt\"{u}r University, Bak\i rk\"{o}y 34156,
 \.{I}stanbul, Turkey}
\email{m.caglar@iku.edu.tr}

\author{Zafer Ercan}
\address{(zafer ercan) Department of Mathematics, Abant \.{I}zzet Baysal University, G\"{o}lk\"{o}y Kamp\"{u}s\"{u}, 14280 Bolu, Turkey}
\email{zercan@ibu.edu.tr}

\subjclass[2000]{Primary 54E35.}

\keywords{Cone metric space, ordered vector space, order unit, interior of cone, metrizability.}
\maketitle 
\begin{abstract} We study the concept of cone metric space in the context of ordered vector spaces by setting up a general and natural framework for it.
    
\end{abstract}
\section{Introduction} 
\noindent 
It is well-known that on vector spaces, vector orderings and cones are in one-to-one correspondence. Moreover, ordered Hausdorff topological vector spaces with a non-empty interior are precisely the ones with a closed cone. This is one of the main motivations in the study of cone metric spaces (see, \cite{aj, HX, dr, rh}). The purpose of this note is to show that a typical generalization of the concept of cone metric space, which we call order-unit-metric space, originates from and naturally fits into the framework of the rich theory of ordered vector spaces (see, \cite{at, ja, pe}). We will set up a general and natural framework which contains, in particular, the cone metric spaces. For the standard terminology and all unexplained notation on ordered vector spaces, we refer to \cite{at}.


The following defines the central object under our consideration and is given in \cite{HX}, where $\ic (K)$ denotes the interior of the set $K$.
\begin{definition}\label{def1} Let $X$ be a non-empty set and $E$ be an ordered real Banach space with $\ic(K)\neq\emptyset$, where $K$ is a closed cone in $E$. A function $d:X\times X\rightarrow E$ is called a {\bf\emph{cone metric}} if:
\begin{enumerate} 
\item[\textnormal{(i)}] $d(x,y)=0$ if and only if $x=y$;
\item[\textnormal{(ii)}] $d(x,y)=d(y,x)$ for all $x$, $y\in X$;
\item[\textnormal{(iii)}] $d(x,y)\leq d(x,z)+d(y,z)$ for all $x$, $y$, $z\in X$.
\end{enumerate}
In this case, the quadruple $(X, E, K, d)$ is called a {\bf\emph{cone metric space}}.
\end{definition}
The ordering ``$\leq$" of the Banach space $E$ in the above definition is generated by the cone $K$ in the usual way. It is also clear that replacing the ordered Banach space $E$ by an ordered topological vector space, the notion of cone metric space can further be generalized: albeit doable, we will not deal with this generalization but take a different path instead and analyze the topology of order-unit-metric spaces.   

As usual, the open ball of a metric space $(X, d)$ with center $x\in X$ and radius $r>0$ is denoted by $B_d(x, r)$ or, for short, by $B(x, r)$ if there is no ambiguity of confusion. Let $(X, E, K, d)$ be a cone metric space. Khami and Pourmahdian \cite{kp} has noted that the family $\{B_{\ll}(x,r): x\in X,\ 0\ll r\}$, where $B_{\ll}(x, r):=\{y\in X : d(x,y)\ll r\}$, forms a basis for a topology on $X$, called the {\bf\emph{cone metric topology}} of the corresponding cone metric space. Here $x\ll y$ stands for the relation $y-x\in\ic (K)$. One of the main issues of the present note is to prove, in a more general setting (cf. Corollary \ref{cor1} below), that the cone metric topology is metrizable. 
\section{Order-Unit-Metric Spaces} 
\noindent We will introduce in this section spaces that extend cone metric spaces naturally. We first need some preparation to study the underlying concepts. 

The notion of an order-unit of an ordered vector space, first given in \cite{kadison}, is as follows (see also \cite[p. 5]{at}, \cite[p. 11]{ja}, and \cite[p. 4]{pe}).
\begin{definition}\label{def2} Let $E$ be an ordered real vector space. An element $0<e\in E$ is called an {\bf\emph{order-unit}} if for each $x\in E$ there exists a $\lambda>0$ such that $x\leq\lambda e$. The set of order units of $E$ will be denoted by $\ou(E)$.
\end{definition}
\noindent Note that the inclusions
$\ou(E)+\ou(E)\included \ou(E)$ and $(0, \infty )\, \ou(E)\included\ou(E)$ hold. Recall that an ordered vector space $E$ is called {\bf\emph{Archimedean}} whenever it follows from $y\in E$, $x\in E^+$, and $\N y\leq x$ that $y\leq 0$. The characterization of the vectors belonging to the interior of the cone of an ordered topological vector space is crucial for our considerations and is given in the following well-known result. A proof of this can be found in \cite[Lemma 2.5]{at} or, in the more general setting of wedges, in \cite[1.3.1]{ja}; we take it up here for the sake of completeness in a relevant form regarding our purposes. In what follows, the notation $(E, K)$ will suggest that the vector ordering making the vector space $E$ into an ordered vector space is generated by the cone $K$ in $E$.
\begin{theorem}\label{thm1} Let $(E, K, \tau)$ be an ordered topological vector space; that is, $(E, K)$ is an ordered vector space and $(E, \tau)$ is a topological vector space. Then, the following are equivalent:
\begin{enumerate} 
\item[\textnormal{(i)}] $e\in \textnormal{int}(K)$.
\item[\textnormal{(ii)}] $[-e,e]$ is a neighborhood of zero.
\end{enumerate} 
In this case, $e$ is an order unit.
\end{theorem}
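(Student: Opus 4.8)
The plan is to reduce every assertion to a statement about neighborhoods of the origin, exploiting the identity
$[-e,e]=(K-e)\cap(e-K)$,
which follows at once from unwinding the relations $-e\leq x$ and $x\leq e$ through the cone: $-e\leq x$ means $x+e\in K$, i.e. $x\in K-e$, while $x\leq e$ means $e-x\in K$, i.e. $x\in e-K$. Once this identification is recorded, both implications become a matter of transporting interior points along the homeomorphisms that any topological vector space supplies.

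For (i) $\Rightarrow$ (ii) I would argue as follows. Assuming $e\in\textnormal{int}(K)$, the translation $x\mapsto x-e$ is a homeomorphism of $E$ carrying $K$ onto $K-e$ and hence $\textnormal{int}(K)$ onto $\textnormal{int}(K-e)$; since $e\in\textnormal{int}(K)$, its image $0$ lies in $\textnormal{int}(K-e)$. Likewise the map $x\mapsto e-x$, a composition of a reflection and a translation, is a homeomorphism carrying $\textnormal{int}(K)$ onto $\textnormal{int}(e-K)$, so again $0\in\textnormal{int}(e-K)$. The intersection of these two open sets is open, contains $0$, and is contained in $[-e,e]$ by the identity above; therefore $[-e,e]$ is a neighborhood of zero.

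The converse (ii) $\Rightarrow$ (i) is shorter. From $[-e,e]=(K-e)\cap(e-K)\included K-e$ we see that $K-e$ contains a neighborhood of zero and is thus itself a neighborhood of zero; translating back by $+e$ shows that $K$ is a neighborhood of $e$, that is, $e\in\textnormal{int}(K)$.

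Finally, to see that $e$ is an order unit under these equivalent conditions, I would invoke the fundamental fact that every neighborhood of zero in a topological vector space is absorbing. Given $x\in E$, continuity of scalar multiplication gives $\tfrac1n x\to 0$, so $\tfrac1n x\in[-e,e]$ for some $n\in\N$; reading off the right-hand inequality $\tfrac1n x\leq e$ yields $x\leq n e$, which is exactly the order-unit condition with $\lambda=n$. The positivity $0<e$ is immediate from $e\in\textnormal{int}(K)\included K$, which gives $e\geq 0$, together with the observation that a proper cone in a non-trivial space cannot contain a neighborhood of the origin, so $e\neq 0$. I expect the only delicate point—the main, if modest, obstacle—to be the clean bookkeeping of which homeomorphism sends which interior point to the origin, and the verification of this last positivity remark; everything else is routine once the identity $[-e,e]=(K-e)\cap(e-K)$ is in hand.
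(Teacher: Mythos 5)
Your proof is correct, and it takes a genuinely (if mildly) different route from the paper's on the forward implication. The paper proves (i) $\Rightarrow$ (ii) by choosing a \emph{circled} neighborhood $V$ of zero with $e+V\subseteq K$ and checking directly that $V\subseteq[-e,e]$ (circledness supplies $-x\in V$, so both $e+x$ and $e-x$ land in $K$); you instead record the identity $[-e,e]=(K-e)\cap(e-K)$ and note that the homeomorphisms $x\mapsto x-e$ and $x\mapsto e-x$ carry $\textnormal{int}(K)$ onto $\textnormal{int}(K-e)$ and $\textnormal{int}(e-K)$ respectively, each containing $0$, so their intersection is an open neighborhood of zero inside $[-e,e]$. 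Your version avoids invoking the existence of a balanced neighborhood base --- a standard but not-free fact about topological vector spaces --- at the cost of a little more bookkeeping; the paper's is shorter once circled neighborhoods are granted. For (ii) $\Rightarrow$ (i) the two arguments are the same translation in different clothes: the paper writes $e+[-e,e]=[0,2e]\subseteq K$, you write $[-e,e]\subseteq K-e$ and translate by $e$. Finally, you actually prove the closing assertion that $e$ is an order unit, which the paper's proof leaves unaddressed: your absorbency argument ($\tfrac1n x\to 0$ forces $\tfrac1n x\in[-e,e]$, whence $x\leq ne$) is the standard one, and your remark on positivity is sound as stated --- if $0\in\textnormal{int}(K)$ then $K$, being absorbing and closed under positive scalar multiples, would equal $E$, contradicting $K\cap(-K)=\{0\}$ when $E\neq\{0\}$ --- though it deserves the one extra line spelling this out, since the definition of order unit in the paper requires $e>0$ strictly.
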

\begin{proof} If $[-e,e]$ is a neighborhood of zero, then since 
$$e+[-e,e]=[0,2e]\included K,$$ we have $e\in \textnormal{int}(K)$. If $e\in \textnormal{int}(K)$, then there is a circled neighborhood of zero $V$ with $e+V\included K$. Then $V\included [-e,e]$: indeed, for each $x\in V$ we have $-x\in V$, so $e+x$ and $e-x$ both lie in $K$. This shows that $x\in [-e,e]$, whence $[-e,e]$ is a neighborhood of zero.
\end{proof}
To put our considerations into perspective, we need to recall two pertinent results: the former is Lemma 2.4 and the latter is Theorem 2.8, of \cite{at}.
\begin{theorem}\label{thm2} Let $(E, K, {\tau})$ be an ordered Hausdorff topological vector space for which $\ic(K)\neq\emptyset$. Then the cone $K$ is Archimedean if and only if $K$ is closed.
\end{theorem}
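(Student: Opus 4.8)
The plan is to prove the two implications separately, since only one of them actually requires the interior hypothesis. The implication ``$K$ closed $\implies$ $K$ Archimedean'' is the routine one and uses only the continuity of scalar multiplication; the converse ``$K$ Archimedean $\implies$ $K$ closed'' is where the assumption $\ic(K)\neq\emptyset$ enters, through Theorem \ref{thm1}. (The Hausdorff hypothesis, part of the standing assumptions, is not in fact needed for either argument.)

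For the first direction, I would suppose $K$ is closed and take $y\in E$ and $x\in E^+$ with $\N y\leq x$, i.e. $x-ny\in K$ for every $n\in\N$. Since $K$ is a cone it is stable under multiplication by $\tfrac1n>0$, so $\tfrac1n x-y\in K$ for all $n$. As $n\to\infty$ we have $\tfrac1n x\to 0$ by continuity of scalar multiplication, hence $\tfrac1n x-y\to -y$; because $K$ is closed, the limit $-y$ lies in $K$, that is $y\leq 0$. This establishes the Archimedean property.

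For the converse, fix once and for all an element $e\in\ic(K)$, which exists by hypothesis. By Theorem \ref{thm1} the order interval $[-e,e]$ is a neighbourhood of zero, and therefore so is $[-\epsilon e,\epsilon e]=\epsilon[-e,e]$ for every $\epsilon>0$. I would then show $\overline{K}\included K$. Let $z\in\overline{K}$. For each $\epsilon>0$ the set $z+[-\epsilon e,\epsilon e]$ is a neighbourhood of $z$, so it meets $K$: pick $k\in K$ with $k-z\in[-\epsilon e,\epsilon e]$, whence $\epsilon e-(k-z)\in K$. Adding this to $k\in K$ gives $z+\epsilon e\in K$, i.e. $-z\leq\epsilon e$, for every $\epsilon>0$.

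The crux is now to convert this family of inequalities into a single application of the Archimedean hypothesis. Taking $\epsilon=\tfrac1n$ yields $-z\leq\tfrac1n e$, equivalently $n(-z)\leq e$, for all $n\in\N$; that is $\N(-z)\leq e$ with $e\in E^+$. The Archimedean property then forces $-z\leq 0$, so $z\in K$, and $K$ is closed. The main obstacle I anticipate is precisely this last passage: extracting the order relation $z+\epsilon e\in K$ from the merely topological approximation of $z$ by points of $K$, and then rescaling it into the form $\N(-z)\leq e$ that the Archimedean definition can digest. The interior point $e$ plays a double role here, supplying both the neighbourhood basis $\{[-\epsilon e,\epsilon e]:\epsilon>0\}$ via Theorem \ref{thm1} and the bounding vector $x=e\in E^+$ needed in the Archimedean inequality.
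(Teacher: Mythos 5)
Your proposal is correct, but there is strictly nothing in the paper to compare it against: the paper states Theorem \ref{thm2} without proof, recalling it verbatim as Lemma 2.4 of \cite{at}, so your argument in effect supplies the omitted proof. Both directions are sound. The implication ``closed $\Rightarrow$ Archimedean'' is the standard limit argument: from $\N y\leq x$ you get $\tfrac1n x-y\in K$, and continuity of scalar multiplication together with closedness yields $-y\in K$; as you observe, neither $\ic(K)\neq\emptyset$ nor Hausdorffness is used here, and indeed the paper itself later invokes exactly this direction for ordered Banach spaces, citing \cite[Theorem 2.38]{at}. For the converse, your use of Theorem \ref{thm1} to obtain the zero-neighborhood base $\{\epsilon[-e,e]:\epsilon>0\}$, the extraction of $z+\epsilon e\in K$ for every $z\in\overline{K}$ (which uses only $K+K\included K$ and positive homogeneity), and the rescaling of $-z\leq\tfrac1n e$ into $\N(-z)\leq e$ so that the Archimedean hypothesis applies with bounding vector $x=e\in E^{+}$ are all correct. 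This is the standard textbook route, and your accounting of exactly which hypotheses each direction consumes is a point the paper's bare citation leaves implicit.
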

\begin{theorem}\label{thm3} Let $(E, K, {\tau})$ be an Archimedean ordered Hausdorff topological vector space with $\ic (K)\neq\emptyset$, and $(E, {\tau})$ be completely metrizable. Then $\ic (K)=\ou (K)$.
\end{theorem}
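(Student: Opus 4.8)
The plan is to prove the two inclusions separately, noting that only one of them uses the completeness and Archimedean hypotheses. First I would observe that $\ic(K)\included\ou(E)$ is already contained in Theorem~\ref{thm1}: its closing sentence says that every $e\in\ic(K)$ is an order unit, and this holds in any ordered topological vector space, with no appeal to the Archimedean property or to metrizability. (Here $\ou(E)$ is the set of order units defined above; since an order unit satisfies $e>0$ it lies in $K$, so this is exactly the set written $\ou(K)$ in the statement.) Thus the whole content of the theorem sits in the reverse inclusion $\ou(E)\included\ic(K)$, and here too I would route through Theorem~\ref{thm1}: by the equivalence (i)$\Leftrightarrow$(ii), to show that an order unit $e$ lies in $\ic(K)$ it suffices to show that $[-e,e]$ is a neighborhood of zero.

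Fix an order unit $e$. The idea is to extract this neighborhood from completeness via the Baire category theorem. Because $e$ is an order unit, for every $x\in E$ there are $\lambda,\mu>0$ with $x\leq\lambda e$ and $-x\leq\mu e$; taking a natural number $n\geq\max\{\lambda,\mu\}$ and using $e\geq 0$ gives $-ne\leq x\leq ne$, i.e. $x\in n[-e,e]$, so that $E=\bigcup_{n\in\N} n[-e,e]$. I would then note that each $n[-e,e]$ is closed: by Theorem~\ref{thm2} the cone $K$ is closed (its hypotheses are precisely those assumed here), hence $[-e,e]=(e-K)\cap(K-e)$ is closed as an intersection of closed sets, and so are its dilates. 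Since $(E,\tau)$ is completely metrizable it is a Baire space, and a Baire space cannot be a countable union of closed sets all having empty interior; therefore some $n[-e,e]$, and hence $[-e,e]$ itself (scaling by $1/n$ being a homeomorphism), has nonempty interior.

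It remains to upgrade \emph{nonempty interior} to \emph{zero is an interior point}, which I would handle by the geometry of $C:=[-e,e]$ rather than by any further topology. The set $C$ is convex and symmetric about the origin ($x\in C\Leftrightarrow -x\in C$). If $p\in\ic(C)$, then $-p\in\ic(C)$ since $x\mapsto -x$ is a homeomorphism of $E$ carrying $C$ onto $C$; choosing open sets $U,V\included C$ with $p\in U$ and $-p\in V$, the set $\tfrac12 U+\tfrac12 V$ is open (a union of translates of the open set $\tfrac12 U$), is contained in $C$ by convexity, and contains $\tfrac12 p+\tfrac12(-p)=0$. Hence $0\in\ic(C)$, so $[-e,e]$ is a neighborhood of zero, and Theorem~\ref{thm1} gives $e\in\ic(K)$, completing $\ou(E)\included\ic(K)$.

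The main obstacle — the step where the strength of the hypotheses is actually spent — is the passage from $E=\bigcup_n n[-e,e]$ to the nonemptiness of the interior of $[-e,e]$: this is the Baire-category argument, and it is exactly what forces both the completeness assumption (to make $E$ a Baire space) and, through Theorem~\ref{thm2}, the closedness of $K$ (needed to guarantee that the covering sets are closed). By contrast, the symmetrization that pushes the origin into the interior is routine convexity and requires no new input.
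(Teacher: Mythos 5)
Your proof is correct. The paper itself gives no proof of Theorem \ref{thm3} --- it is recalled from \cite[Theorem 2.8]{at} --- so the relevant comparison is with that source, and your argument is precisely the classical one given there: the easy inclusion $\ic(K)\included\ou(E)$ read off from Theorem \ref{thm1}; then, for an order unit $e$, the absorption $E=\bigcup_{n\in\N}n[-e,e]$, the closedness of $[-e,e]=(e-K)\cap(K-e)$ obtained from Theorem \ref{thm2} (whose hypotheses --- Hausdorff, $\ic(K)\neq\emptyset$, Archimedean --- are exactly those assumed here), the Baire category theorem to produce an interior point of $[-e,e]$, and the convexity--symmetry argument moving that interior point to the origin before concluding via Theorem \ref{thm1}(ii). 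You also correctly locate where the strength of the hypotheses is spent (complete metrizability for the Baire property, the Archimedean assumption for closedness of the covering sets), and your reading of $\ou(K)$ in the statement as the set $\ou(E)$ of order units, all of which lie in $K$, is the intended one.
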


We are now ready to introduce the natural generalization of the notion of cone metric space. 

\begin{definition}\label{def2} Let $X$ be a non-empty set and $E$ be an ordered real vector space with $\ou(E)\neq\emptyset$. A function $d:X\times X\rightarrow E$ is called an {\bf\emph{order-unit-metric}} if:
\begin{enumerate} 
\item[\textnormal{(i)}] $d(x,y)=0$ if and only if $x=y$;
\item[\textnormal{(ii)}] $d(x,y)=d(y,x)$ for all $x$, $y\in X$;
\item[\textnormal{(iii)}] $d(x,y)\leq d(x,z)+d(y,z)$ for all $x$, $y$, $z\in X$.
\end{enumerate}
In this case, the triple $(X, E, d)$ is called an {\bf\emph{order-unit-metric space}}.
\end{definition}
Note that $d$ takes values in the cone $E_+:=\{e\in E\mid e\geq 0\}$ of $E$; that is, $d(x,y)\geq 0$ for each $x, y\in X$. Moreover, it is clear that every cone metric space is an order-unit-metric space, since $\ic (E_+)\included\ou(E)$ by Theorem \ref{thm1}. It should also be pointed out, comparing with Definition \ref{def1}, that in case $E$ is a Banach space in Definition \ref{def2} more is true: recall that an ordered Banach space having a closed cone and order units is called a {\bf\emph{Krein space}} \cite[Definition 2.62]{at}, and for such a space $E$, the cone $E_+$ is Archimedean by Theorem \ref{thm2}, and the order units are precisely the interior points of $E_+$ by Theorem \ref{thm3}. In particular, an ordered Banach space with a closed cone is a Krein space if and only if the interior of its cone is non-empty: ordered Banach spaces $E$ appearing in the definition of cone metric spaces are, in fact, Krein spaces. That the notions of order-unit-metric space and cone metric space coincide whenever the space $E$ is completely metrizable will be deferred until Theorem \ref{thm9}.
\section{Order-Unit-Topological Spaces}
\noindent The subject matter of this section is the topology of order-unit-metric spaces and convergence concepts therein. We will write $x\lll y$ if $y-x\in\ou(E)$, where $(X, E, d)$ is an order-unit-metric space. 

The proof of the following is elementary and is therefore omitted.
\begin{theorem}\label{thm4} The family $\{B_{\lll}(x,r): x\in X,\ 0\lll r\}$ for an order-unit-metric space $(X, E, d)$, where $B_{\lll}(x, r):=\{y\in X : d(x,y)\lll r\}$,  forms a basis for a topology on $X$. 
\end{theorem}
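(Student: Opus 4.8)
The plan is to verify the two defining axioms for a family of sets to be a base of a topology: first, that the sets $B_{\lll}(x,r)$ cover $X$, and second, that whenever a point lies in the intersection of two such sets, some member of the family fits between the point and that intersection. The covering axiom is immediate: fixing $x\in X$ and using that $\ou(E)\neq\emptyset$, I would pick any $r\in\ou(E)$; then $d(x,x)=0$ by (i) forces $r-d(x,x)=r\in\ou(E)$, that is $d(x,x)\lll r$, so $x\in B_{\lll}(x,r)$.

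The substance lies in the intersection axiom. Given $z\in B_{\lll}(x,r)\cap B_{\lll}(y,s)$, I would set $t_1:=r-d(x,z)$ and $t_2:=s-d(y,z)$, both of which are order units by the definitions of the two balls. The aim is to produce a single $t\in\ou(E)$ with $B_{\lll}(z,t)\included B_{\lll}(x,r)\cap B_{\lll}(y,s)$. The guiding idea is the triangle inequality: for any $w$ with $d(z,w)\lll t$ one has $d(x,w)\leq d(x,z)+d(z,w)$ by (iii) and (ii), and I want the right-hand side to remain $\lll r$. For this the decisive tool is the monotonicity statement $\ou(E)+E_+\included\ou(E)$, i.e. that adding a positive vector to an order unit again yields an order unit, which I would record separately: if $u\in\ou(E)$ and $v\geq 0$, then $u+v>0$ by pointedness of the cone, and $x\leq\lambda u\leq\lambda(u+v)$ transfers the defining property. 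Equivalently, $a\leq b$ and $b\lll c$ imply $a\lll c$, so that $\lll$ is compatible with $\leq$.

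The main obstacle is producing the single radius $t$, since a general ordered vector space is not a lattice and $t_1,t_2$ need not have an infimum. I would instead construct a common \emph{lower} order-unit bound: using that $t_1$ is an order unit, choose $\lambda>0$ with $t_2\leq\lambda t_1$, enlarge $\lambda$ so that $\lambda\geq 1$, and set $t:=\lambda^{-1}t_2$. Then $t\in\ou(E)$ because $(0,\infty)\,\ou(E)\included\ou(E)$; moreover $t\leq t_1$ since $t_2\leq\lambda t_1$, and $t\leq t_2$ since $\lambda\geq 1$ and $t_2\geq 0$.

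With $t$ in hand, the verification closes mechanically. First $z\in B_{\lll}(z,t)$ because $d(z,z)=0\lll t$. Next, for $w\in B_{\lll}(z,t)$ the relation $t-d(z,w)\in\ou(E)$ together with $t\leq t_1$ and the monotonicity above gives $d(z,w)\lll t_1$, that is $d(x,z)+d(z,w)\lll r$; combining this with $d(x,w)\leq d(x,z)+d(z,w)$ and monotonicity once more yields $d(x,w)\lll r$, so $w\in B_{\lll}(x,r)$. The same computation using $t\leq t_2$ gives $w\in B_{\lll}(y,s)$, establishing the inclusion and hence the theorem.
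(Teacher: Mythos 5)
Your proof is correct; the paper omits this proof as ``elementary,'' and your argument is precisely the intended verification of the two base axioms (covering, plus interpolation at points of intersections). The one genuinely nontrivial point---that in the absence of lattice structure one must manufacture a common order-unit lower bound for $t_1=r-d(x,z)$ and $t_2=s-d(y,z)$, which you do by majorizing $t_2\leq\lambda t_1$ with $\lambda\geq 1$ and taking $t=\lambda^{-1}t_2$---is handled correctly, as is the auxiliary monotonicity $\ou(E)+E_+\included\ou(E)$ that makes $\lll$ compatible with $\leq$.
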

We shall call this topology as the {\bf\emph{order-unit-topology}} of the corresponding order-unit-metric space. The following observation establishes the fact that the cone metric topology can be recovered via the order-unit-topology.
\begin{theorem}\label{thm5} Let $(X, E, d)$ be an order-unit-metric space and $e\in\ou(E)$ be fixed. Then the family $\{B_{\lll}(x, re) : 0<r\in\mathbb{R}^{+}\}$ is a basis for the cone metric topology.
\end{theorem}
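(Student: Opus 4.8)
The plan is to verify the two defining conditions of a topological basis. Since the cone metric topology is only meaningful when $E$ carries a cone metric structure, i.e. when $E$ is a Krein space, Theorem \ref{thm3} applies and yields $\ic(K)=\ou(E)$; hence the relations $\ll$ and $\lll$ coincide and the cone metric topology is \emph{literally} the order-unit-topology of Theorem \ref{thm4}. It therefore suffices to show that the restricted family $\mathcal{B}':=\{B_{\lll}(x,re): x\in X,\ 0<r\}$ is a basis for the order-unit-topology, i.e. that (a) every member of $\mathcal{B}'$ is open, and (b) for every full basic set $B_{\lll}(x,s)$ with $s\in\ou(E)$ and every $y\in B_{\lll}(x,s)$ there is a member of $\mathcal{B}'$ containing $y$ and contained in $B_{\lll}(x,s)$; refining each member of the known basis in this way produces a basis for the same topology.

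Before the main argument I would record two monotonicity properties of $\ou(E)$. Since $(0,\infty)\,\ou(E)\included\ou(E)$, we have $re\in\ou(E)$ for every $r>0$; hence each $B_{\lll}(x,re)$ is a basic order-unit-open set and condition (a) is immediate. I would also prove the upward-closure $\ou(E)+E_+\included\ou(E)$: if $a\in\ou(E)$ and $c\geq 0$, then $a+c\geq a>0$, and choosing $\lambda>0$ with $x\leq\lambda a$ gives $x\leq\lambda a\leq\lambda(a+c)$, so $a+c\in\ou(E)$; in particular $b\geq a\in\ou(E)$ forces $b\in\ou(E)$.

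For the refinement condition (b), fix $y\in B_{\lll}(x,s)$ and set $w:=s-d(x,y)\in\ou(E)$. The crucial step is the choice of the scalar: applying the order-unit property of $w$ to the \emph{vector} $e$ produces $\mu>0$ with $e\leq\mu w$, and then $r:=1/\mu>0$ satisfies $re\leq w$, that is $w-re\in E_+$. I claim $B_{\lll}(y,re)\included B_{\lll}(x,s)$. Indeed, for $z\in B_{\lll}(y,re)$ one has $re-d(y,z)\in\ou(E)$, so $w-d(y,z)=(w-re)+(re-d(y,z))\in E_++\ou(E)\included\ou(E)$; combining the triangle inequality (iii) with the symmetry (ii) gives $d(x,z)\leq d(x,y)+d(y,z)$, whence $s-d(x,z)\geq w-d(y,z)\in\ou(E)$, and upward-closure yields $s-d(x,z)\in\ou(E)$, i.e. $z\in B_{\lll}(x,s)$. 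Since $d(y,y)=0$ we have $0\lll re$, so $y\in B_{\lll}(y,re)$, which is the required member of $\mathcal{B}'$.

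The only non-formal point---and hence the main obstacle---is manufacturing the radius $r$: the inclusion succeeds precisely because the single fixed order unit $e$ can be scaled down below the ``slack'' $w=s-d(x,y)$. This is exactly where the hypothesis that $w$ is an order unit is indispensable, as it is what lets one order unit dominate from below every positive slack, thereby rendering the one-parameter family of radii $\{re:r>0\}$ cofinal enough to recover the entire topology; everything else is the routine basis bookkeeping together with the two monotonicity facts for $\ou(E)$.
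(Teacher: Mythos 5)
Your proof is correct and rests on the same key step as the paper's: applying the order-unit property of the given radius to the fixed unit $e$ so as to scale it underneath ($re\leq w$), and then absorbing the positive remainder via $\ou(E)+E_+\included\ou(E)$ to get the ball inclusion. The paper's one-line proof records only the inclusion $B_{\ll}(x,re)\included B_{\lll}(x,u)$ at a common center; your version supplies exactly what it leaves implicit --- the re-centering at an arbitrary point through the slack $w=s-d(x,y)$, the upward-closure lemma for $\ou(E)$, and the identification of $\ll$ with $\lll$ via the Krein-space reading of Theorem \ref{thm3} --- so it is a complete rendering of the same argument rather than a different one.
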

\begin{proof} Let $u\in\ic (E_+)$, which is an order-unit, be given. Then $re\leq u$ for some $r>0$, from which it directly follows that $B_{\ll}(x,re)\included B_{\lll}(x,u)$, whence the proof.
\end{proof}
We will proceed by recalling some terminology regarding convergence properties of sequences in cone metric spaces, which will be used in the sequel. 

Let  $(X,E,K,d)$ be a cone metric space and  $(x_{n})$ be a sequence in $X$. The sequence $(x_{n})$ is said to be $\ll$-$u$-{\bf\emph{Cauchy}}, where $0\ll u$, if for every ${\epsilon}>0$ there exists a $k\in\N$ such that $d(x_{n},x_{m})\ll\epsilon u$ for all $n,m\geq k$. The sequence $(x_{n})$ is called $\ll$-{\bf\emph{Cauchy}} if it is $\ll$-$u$-Cauchy for all $u\in E$ with $0\ll u$.   The sequence $(x_{n})$ is said to be $\ll$-$u$-{\bf\emph{convergent}} to $x\in X$, where $0\ll u$, if for every ${\epsilon}>0$ there exists a $k\in\N$ such that $d(x_n,x)\ll\epsilon u$ for all $n,m\geq k$; this is denoted by $x_n\to x\, (\ll u)$. The sequence $(x_n)$ is $\ll$-{\bf\emph{convergent}} to $x\in X$, denoted by $x_n\rightarrow x\, (\ll)$, if $x_{n}\to x\, (\ll u)$ for all $u\in E$ with $0\ll u$. 

For a fixed $e\in E$ with $0\ll e$, the following facts are immediate:
\begin{equation}\begin{aligned}\label{eq0}
\text{(i)}\ & \text{$(x_n)$ is $\ll$-Cauchy if and only if it is $\ll$-$e$-Cauchy.}\\
\text{(ii)}\ & \text{$x_n\to x\, (\ll )$ if and only if $x_{n}\to x\, (\ll e)$.}
\end{aligned}\end{equation}
One should note that replacing the relation ``$\ll$'' (defined in cone metric spaces) in the above definitions by the relation ``$\lll$'', the corresponding concepts can be defined verbatim in order-unit-metric spaces. As an instance, if $(X, E, d)$ is an order-unit-metric space, then then a sequence $(x_n)$ in $X$ is said to be $\lll$-$u$-{\bf\emph{Cauchy}}, where $0\lll u$, if for every $\epsilon>0$ there exists a $k\in \N$ such that $d(x_{n},x_{m})\lll\epsilon u$ for all $n,m\geq k$. The proof of the following result, revealing the interdependence of these concepts, requires no effort and is therefore not given.
\begin{theorem}\label{thm6} Let $(X,E,d)$ be an order-unit-metric space, $e\in\ou(E)$, $x\in X$, and $(x_n)$ be a sequence in $X$. Then, the following are equivalent:
\begin{enumerate} 
\item[\textnormal{(i)}] $x_n\rightarrow x$ in the order-unit-topology. 
\item[\textnormal{(ii)}]  $x_n\rightarrow x\, (\lll e)$.   
\item[\textnormal{(iii)}] $x_n\rightarrow x\, (\lll)$.
\end{enumerate}
\end{theorem}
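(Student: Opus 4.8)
The plan is to establish the cycle of implications (i)$\Rightarrow$(ii)$\Rightarrow$(iii)$\Rightarrow$(i). Two elementary algebraic facts about order units will carry the bulk of the argument, so I would isolate them at the outset. The first is that adding a positive element to an order unit again yields an order unit: if $v\in\ou(E)$ and $w\in E_+$, then $v+w\in\ou(E)$, since $v+w\geq v>0$ forces $v+w>0$, and for any $x\in E$ one has $x\leq\lambda v\leq\lambda(v+w)$ for a suitable $\lambda>0$. (This sharpens the inclusion $\ou(E)+\ou(E)\included\ou(E)$ already recorded in the text.) The second is the mutual comparability of order units: if $e,u\in\ou(E)$, then $u$ being an order unit supplies a $\mu>0$ with $e\leq\mu u$.

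For (i)$\Rightarrow$(ii) I would observe that, for each $\epsilon>0$, the set $B_{\lll}(x,\epsilon e)$ is an open neighborhood of $x$, because $\epsilon e\in\ou(E)$ and $d(x,x)=0\lll\epsilon e$. Topological convergence then forces $d(x_n,x)\lll\epsilon e$ for all large $n$; as $\epsilon>0$ is arbitrary, this is precisely $x_n\to x\,(\lll e)$.

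The implication (ii)$\Rightarrow$(iii) is where the algebra enters and is the step I expect to be the crux. Given an arbitrary $u\in\ou(E)$ and $\epsilon>0$, pick $\mu>0$ with $e\leq\mu u$ and apply (ii) with $\epsilon/\mu$ in place of $\epsilon$, so that eventually $(\epsilon/\mu)e-d(x_n,x)\in\ou(E)$. Writing
\[
\epsilon u-d(x_n,x)=\tfrac{\epsilon}{\mu}\bigl(\mu u-e\bigr)+\bigl(\tfrac{\epsilon}{\mu}e-d(x_n,x)\bigr),
\]
the first summand lies in $E_+$ (because $e\leq\mu u$) and the second in $\ou(E)$, so the first fact gives $\epsilon u-d(x_n,x)\in\ou(E)$, i.e. $d(x_n,x)\lll\epsilon u$ eventually. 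Since $u$ and $\epsilon$ were arbitrary, $x_n\to x\,(\lll)$.

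Finally, for (iii)$\Rightarrow$(i) I would first note that, as $\{\epsilon u:\epsilon>0,\ u\in\ou(E)\}=\ou(E)$ (using $(0,\infty)\,\ou(E)\included\ou(E)$), hypothesis (iii) is equivalent to saying that for every $r\in\ou(E)$ one has $d(x_n,x)\lll r$ eventually. To convert this into convergence in the order-unit-topology, let $U$ be open with $x\in U$ and choose, by Theorem \ref{thm4}, a basic set $B_{\lll}(y,s)\included U$ containing $x$, so that $r:=s-d(y,x)\in\ou(E)$. For $n$ large, $d(x_n,x)\lll r$, and the triangle inequality gives $d(y,x_n)\leq d(y,x)+d(x_n,x)$; hence
\[
s-d(y,x_n)=\bigl(d(y,x)+d(x_n,x)-d(y,x_n)\bigr)+\bigl(r-d(x_n,x)\bigr),
\]
a sum of an element of $E_+$ and an element of $\ou(E)$, and therefore itself in $\ou(E)$. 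Thus $d(y,x_n)\lll s$, i.e. $x_n\in B_{\lll}(y,s)\included U$ eventually, which proves (i). The only genuine subtlety throughout is ensuring each decomposition splits into a positive part and an order-unit part so that the ``order unit $+$ positive $=$ order unit'' principle applies; everything else is routine bookkeeping with the triangle inequality.
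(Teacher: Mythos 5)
Your proof is correct in every step: the two auxiliary facts (order unit plus positive element is an order unit; mutual comparability of order units) are valid in any ordered vector space, and each of the three implications, including the decompositions in (ii)$\Rightarrow$(iii) and (iii)$\Rightarrow$(i), checks out against the definitions and the triangle inequality. The paper itself omits the proof, remarking that it ``requires no effort,'' and your argument is precisely the routine verification the authors intended to leave to the reader, so there is nothing to compare beyond noting that you have supplied the suppressed details completely.
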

\section{Metrizability of Order-Unit-Topological Spaces}

\noindent The main results of the present note concerning the metrizability of order-unit-topology-\ cal spaces along with their consequences will be presented in this section. The principal machinery we use originates from the works of R. Kadison and F.F. Bonsall concerning representations of topological algebras and ideals in partially ordered spaces, respectively.

Let $E$ be an Archimedean ordered real vector space with an order unit $e>0$. We denote the (necessarily non-empty) set of positive functionals $f:E\rightarrow\mathbb{R}$ with $f(e)=1$ by $H_{e}$. As remarked by Kadison \cite[p. 405]{kadison}, the space $H_{e}$ with the weak*-topology is a compact Hausdorff space. The basic result in this direction is the following, due to Bonsall \cite[Theorem 4 \& Corollary 2]{b}.
\begin{theorem}\label{thm7} Let $E$ be an Archimedean ordered vector space with an order unit $e>0$. Then $E$ is isomorphic to a subspace of $C(H_{e})$ via the map
$${\pi}:E\rightarrow C(H_e),\quad {\pi}(x)(f):=f(x).$$
\end{theorem}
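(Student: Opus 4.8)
The plan is to establish, in turn, that $\pi$ maps into $C(H_e)$, that it is linear, and that it is a bipositive injection, this last property being the crux. Linearity is immediate from $\pi(\alpha x+\beta y)(f)=f(\alpha x+\beta y)=\alpha f(x)+\beta f(y)$, and for each fixed $x\in E$ the evaluation $f\mapsto f(x)$ is, by the very definition of the weak*-topology, continuous on the compact Hausdorff space $H_e$; hence $\pi(x)\in C(H_e)$. Thus everything reduces to the order-theoretic content of the map.

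The heart of the matter is the identity
$$\max_{f\in H_e}f(x)=p(x),\qquad p(x):=\inf\{\mu\in\R : x\leq\mu e\},$$
valid for every $x\in E$. First I would verify that $p$ is a well-defined, real-valued, sublinear functional. Finiteness from above is exactly the assertion that $e$ is an order unit, whereas finiteness from below is where the Archimedean hypothesis is indispensable: were the defining set unbounded below, one would obtain $n e\leq -x$ for all $n\in\N$ with $-x\geq e>0$, and the Archimedean property would then force $e\leq 0$, a contradiction. The inequality $\max_{f}f(x)\leq p(x)$ is routine, since $x\leq\mu e$ yields $f(x)\leq\mu f(e)=\mu$ for every $f\in H_e$.

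For the reverse inequality I would use the Hahn--Banach theorem in its extension form. On the line $M=\R x$ put $g(tx):=t\,p(x)$; sublinearity of $p$ together with the consequence $p(x)+p(-x)\geq p(0)=0$ gives $g\leq p$ on $M$, so $g$ extends to a linear functional $f$ on $E$ with $f\leq p$ everywhere. Domination by $p$ then forces $f$ to be positive, for $y\geq 0$ implies $p(-y)\leq 0$ and hence $f(-y)\leq 0$; it also forces $f(e)=1$, since $f(e)\leq p(e)=1$ and $f(-e)\leq p(-e)=-1$. Therefore $f\in H_e$ and $f(x)=p(x)$, which is precisely the attainment of the maximum.

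With this identity established the remaining assertions follow formally. For any $\lambda\in\R$ one reads off that $x\leq\lambda e$ holds if and only if $f(x)\leq\lambda$ for all $f\in H_e$; taking $\lambda=0$ and replacing $x$ by $-x$ shows that $x\geq 0$ if and only if $\pi(x)\geq 0$ in the pointwise order of $C(H_e)$, so $\pi$ is bipositive. Injectivity is then immediate: if $\pi(x)=0$ then $f(x)=0$ for all $f\in H_e$, whence $x\geq 0$ and $x\leq 0$, so $x=0$ by antisymmetry of the ordering. Consequently $\pi$ is a bipositive linear isomorphism of $E$ onto the subspace $\pi(E)\included C(H_e)$. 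I expect the Hahn--Banach step---arranging the dominating functional $p$ and extracting from it a functional that is at once positive and normalized at $e$---to be the principal obstacle, with the Archimedean hypothesis entering decisively through the finiteness of $p$.
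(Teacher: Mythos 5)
Your argument is correct in substance, and the first thing to say is that the paper itself gives no proof of this theorem: it is quoted from Bonsall (Theorem 4 \& Corollary 2 of \cite{b}), so there is no internal argument to compare against. What you have written is essentially the classical Kadison--Bonsall argument behind that citation: introduce the sublinear functional $p(x)=\inf\{\mu\in\R : x\leq\mu e\}$, whose finiteness from below is exactly where the Archimedean hypothesis enters as you say; use Hahn--Banach to produce, for each $x$, a functional $f\leq p$ with $f(x)=p(x)$; check $f\in H_e$; and read off bipositivity and injectivity. The individual steps --- $g\leq p$ on $\R x$ via $p(x)+p(-x)\geq p(0)=0$, positivity of $f$ from $f(-y)\leq p(-y)\leq 0$, and $f(e)=1$ from the two estimates against $p(\pm e)$ --- are all sound. (A notational caveat: your $p$ is the order-unit sublinear functional, not the norm $p$ of (\ref{eq1}); the latter equals $\max\bigl(p(x),p(-x)\bigr)$ in your notation.)

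One step deserves to be made explicit, because it is not ``read off'' and because it is the second, independent place where Archimedeanness is used. In the equivalence ``$x\leq\lambda e$ if and only if $f(x)\leq\lambda$ for all $f\in H_e$'', the ``if'' direction only gives you $p(x)\leq\lambda$, hence $x\leq(\lambda+\tfrac{1}{n})e$ for every $n$, i.e.\ $\N\,(x-\lambda e)\leq e$; concluding $x\leq\lambda e$ requires the Archimedean property once more (equivalently, the attainment $x\leq p(x)e$, since the infimum defining $p$ need not be attained in a general ordered vector space). This is precisely where the hypothesis secures bipositivity and hence injectivity: without it, $\pi$ is still a positive linear map but can fail to be injective --- in $\R^2$ with the lexicographic order, $e=(1,0)$ is an order unit, yet every $f\in H_e$ is evaluation of the first coordinate, so $\pi(0,1)=0$. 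Your write-up attributes the Archimedean hypothesis only to the finiteness of $p$; with the sentence above added, the proof is complete.
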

If $E$ is as in Theorem \ref{thm7}, then, obviously, the map  
\begin{equation}\label{eq1} p:E\to\R,\quad p(x):=\|\pi(x)\|\end{equation} is  a norm, where $\|\cdot\|$ is the usual supremum norm on $C(H_e)$. Before presenting the main result, let us observe the following fact which will be used in the proof of it. 

\begin{lemma}\label{new} Let $E$ be an ordered vector space, $e\in E$ be an order unit, and $x\in E$. Then, the following are equivalent:
\begin{enumerate} 
\item[\textnormal{(i)}] $x<re$ for some $0<r<1$. 
\item[\textnormal{(ii)}]  $e-x$ is an order unit of $E$.   
\end{enumerate}
\end{lemma}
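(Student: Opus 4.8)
The plan is to establish the two implications separately, reducing each to the single defining feature of an order unit, namely that $e$ dominates every vector of $E$ from above, and exploiting the fact that the cone generating the order is proper.

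For (i) $\Rightarrow$ (ii), I would start from $x<re$ with $0<r<1$, which in particular gives $x\leq re$ and hence
\[ e-x\geq e-re=(1-r)e. \]
Since $1-r>0$ and $e>0$, the vector $(1-r)e$ is strictly positive, so $e-x\geq(1-r)e>0$; in particular $e-x>0$. To see that $e-x$ is genuinely an order unit, I would take an arbitrary $y\in E$, use that $e$ is an order unit to produce $\lambda>0$ with $y\leq\lambda e$, and then multiply the inequality $(1-r)e\leq e-x$ by the positive scalar $\lambda/(1-r)$ and chain the estimates to obtain
\[ y\leq\lambda e\leq\frac{\lambda}{1-r}(e-x), \]
which exhibits $e-x$ as dominating $y$ from above, as required.

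For (ii) $\Rightarrow$ (i), I would apply the order-unit property of $e-x$ to the particular vector $e$, obtaining $\mu_{0}>0$ with $e\leq\mu_{0}(e-x)$. A useful preliminary observation is that, because $e-x\geq 0$, this scalar may be freely enlarged: the inequality $e\leq\mu(e-x)$ persists for every $\mu\geq\mu_{0}$, since $\mu(e-x)-e=\mu_{0}(e-x)-e+(\mu-\mu_{0})(e-x)$ is a sum of two positive vectors. Choosing $\mu>\max\{\mu_{0},1\}$ and rearranging then yields $x\leq\left(1-\frac{1}{\mu}\right)e=r_{0}e$ with $0<r_{0}<1$.

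The main obstacle is the strictness demanded in (i): the argument so far only delivers the weak inequality $x\leq r_{0}e$. To upgrade it I would pick any $r$ with $r_{0}<r<1$ and write
\[ re-x=(r-r_{0})e+(r_{0}e-x), \]
noting that $(r-r_{0})e>0$ while $(r_{0}e-x)\geq 0$; since the cone of $E$ is proper, the sum of a strictly positive and a positive vector is again strictly positive, so $re-x>0$, that is, $x<re$ with $r\in(0,1)$. I expect this perturbation of $r_{0}$ to be the only delicate point, as it is precisely where the strict inequality is recovered and where antisymmetry (pointedness) of the order is quietly used.
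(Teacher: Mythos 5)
Your proof is correct and is essentially the paper's argument: both directions rest on the scalar correspondence $\lambda=1/(1-r)$ between $x\leq re$ and $e\leq\lambda(e-x)$, with the order-unit property of $e$ then used to dominate an arbitrary $y\in E$. The only difference is one of explicitness: you spell out the strictness upgrade in (ii) $\Rightarrow$ (i) (enlarging $\mu$ past $\max\{\mu_{0},1\}$ and then perturbing $r_{0}$ to $r$, using pointedness of the cone), a point the paper compresses into the bare assertion that some $0<\epsilon<1$ satisfies $\epsilon e<e-x$.
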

\begin{proof} Suppose that (i) holds. Choose $\lambda>1$ with $r\lambda=\lambda-1$. Then $e<\lambda(e-x)$, which shows that $e-x$ is an oder unit. Now suppose that $e-x$ is an order unit. Then there exists $0<{\epsilon}<1$ such that ${\epsilon}e<e-x$, whence $x<(1-\epsilon)e$. Choose now $r:=1-\epsilon$. 
\end{proof}
We are now in a position to give the main result of this note.
\begin{theorem}\label{thm8} Let $(X, E, d)$ be an Archimedean order-unit-metric space and $e\in\ou (K)$ be fixed. Then the order-unit-topology of $(X, E, d)$ is metrizable.
\end{theorem}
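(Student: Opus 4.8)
The plan is to push the order structure of $E$ into $C(H_e)$ through Bonsall's embedding $\pi$ of Theorem~\ref{thm7} and to metrize $X$ with the norm $p$ from \eqref{eq1}. Explicitly, I would set
$$\rho:X\times X\to\R,\qquad \rho(x,y):=p\bigl(d(x,y)\bigr)=\bigl\|\pi\bigl(d(x,y)\bigr)\bigr\|,$$
and show that $\rho$ is a metric inducing the order-unit-topology. The metric axioms are immediate: symmetry follows from (ii); $\rho(x,y)=0$ holds precisely when $d(x,y)=0$, i.e. when $x=y$, since $p$ is a norm and by (i); and the triangle inequality is obtained by applying the positive linear map $\pi$ to (iii). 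Indeed, as $d$ takes values in $E_+$, each function $\pi(d(\cdot,\cdot))$ is non-negative on $H_e$, so $0\le\pi(d(x,y))\le\pi(d(x,z))+\pi(d(z,y))$ pointwise, and the supremum norm is monotone and subadditive on the positive cone of $C(H_e)$.

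The core of the proof is to show that the two topologies share the very same basic balls. I would first record that $\pi(e)=\mathbf 1$, the function constantly equal to $1$, because $f(e)=1$ for every $f\in H_e$, and that $\pi$ is an order embedding; hence $u\le\lambda e\iff f(u)\le\lambda$ for all $f\in H_e$, which identifies $p$ on the positive cone with the order-unit norm:
$$p(u)=\sup_{f\in H_e}f(u)=\inf\{\lambda>0:u\le\lambda e\}\qquad(u\in E_+).$$
The key step is then the equivalence, for $u\in E_+$ and $s>0$,
$$se-u\in\ou(E)\quad\Longleftrightarrow\quad p(u)<s.$$
To prove it I would rescale by $s$ and apply Lemma~\ref{new} with $x=\tfrac{1}{s}u$, so that $se-u\in\ou(E)$ holds exactly when $u<rse$ for some $0<r<1$; reconciling this strict vector inequality with the infimum formula for $p$, using that $\{\lambda:u\le\lambda e\}$ is an up-set, yields $p(u)<s$, and conversely.

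Granting the equivalence, for every $x\in X$ and $s>0$ the balls coincide on the nose,
$$B_\rho(x,s)=\{y\in X:p(d(x,y))<s\}=\{y\in X:se-d(x,y)\in\ou(E)\}=B_{\lll}(x,se).$$
Since $se\in\ou(E)$, each of these is a basic open set of the order-unit-topology, and by Theorem~\ref{thm5} the family $\{B_{\lll}(x,se):x\in X,\ s>0\}$ is already a basis for it (alternatively, a direct check: given $r\in\ou(E)$ and $z\in B_{\lll}(x,r)$, pick $\sigma>0$ with $\sigma e\le r-d(x,z)$; then (iii) gives $B_{\lll}(z,\sigma e)\included B_{\lll}(x,r)$). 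Consequently the metric topology of $\rho$ equals the order-unit-topology, which is thus metrizable.

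I expect the ball-matching equivalence $se-u\in\ou(E)\iff p(u)<s$ to be the main obstacle. It is exactly where the Archimedean hypothesis is indispensable, since it is what makes $\pi$ an order embedding and $p$ a genuine norm rather than a seminorm; it is where the identification of $p$ with the order-unit norm must be made precise; and it is where Lemma~\ref{new} does the real work of converting the strict order inequality into the order-unit condition. Once this is in hand, the metric axioms and the passage to a basis are purely formal.
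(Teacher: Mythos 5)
Your proposal is correct and follows essentially the same route as the paper: the same metric $p\circ d$ built from Bonsall's embedding of Theorem~\ref{thm7}, with Lemma~\ref{new} doing the same work of converting strict inequalities against $e$ into order-unit conditions, and the Archimedean hypothesis entering exactly where you say, through the bipositivity of $\pi$ (which the paper itself uses tacitly in the inclusion $C\included B$ of its Claim 1). Your one refinement is to establish the exact ball identity $B_{\rho}(x,s)=B_{\lll}(x,se)$ via the identification of $p$ on $E_+$ with the order-unit gauge $\inf\{\lambda>0 : u\leq\lambda e\}$, which subsumes the paper's Claims 1--4 and in particular renders its Claim 3 --- an inclusion proved there by a sequential contradiction argument with $\lambda_n\to\infty$ --- unnecessary.
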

\begin{proof} Define the map $\overline{d}:X\times X\to\R$ by 
\begin{equation}\label{eq2}\overline{d}(x, y):=p(d(x,y)),\end{equation} where $p$ is the norm given in (\ref{eq1}). Clearly, $p$ is a metric on $X$. We claim that the topology of the metric space $(X,\overline{d})$ and the order-unit-topology coincide. The proof of this will be completed upon considering the following chain of four claims, where we keep $x\in X$ as fixed.

{\it Claim 1.}  One has 
$$A:=B_{\lll}(x, e)=B:=\bigcup_{0<r<1}\{y : d(x,y) < re\}=C:=\bigcup_{0<r<1}B_{\overline{d}}(x, r).$$

Indeed, the equality of $A=B$ follows from Lemma \ref{new}. Let $y\in B$. Then $d(x,y) < re$ for some $0<r<1$. Choose $r<r'<1$. Then $\overline{d}(x,y)\leq r < r'$, so $y\in C$, therewith $B\included C$. It is obvious that $C\included B$. Thus $A=B=C$, proving the claim.

{\it Claim 2.} For every $u\in E$ with $0\lll u$, the set $B_{\lll}(x, u)$ is $\overline{d}$-open. 

Indeed, from Claim 1, this is true for $e=u$. Let $\mu > 0$ be given. As in Claim 1, we have 
$$B_{\lll}(x, \mu e)=\bigcup_{0<r<1}\{y : d(x,y) < r \mu e\}={1\over \mu} \bigcup_{0<r<1}B_{\overline{d}}(x, r),$$ showing that the claim is also true for $u=\mu e$.

{\it Claim 3.} For each $0<r\in\mathbb{R}$, there exists $k\in\mathbb{N}$ such that 
$$B_{\lll}(x, (1/k)\, e)\included B_{\overline{d}}(x,r).$$
Seeking for a contradiction, suppose that the claim is not true. Then there exists a sequence $(y_n)$ in $X$ and a sequence $(\lambda_n)$ in $\mathbb{R}^{+}$ with ${\lambda}_{n}\rightarrow {\infty}$ such that 
$$e\leq\lambda_n\left({1\over n}\, e-d(x, y_n)\right)\quad\textnormal{and}\quad r\leq \overline{d}(x, y_n)$$ for all $n$. (For the first inequality, the fact that ${1\over n}\, e-d(x,y_n)$ is an order unit of $E$ for all $n\in\mathbb{N}$ is used.) This implies 
$\lambda_n\, d(x, y_n)\leq(\lambda_n/n-1)\, e,$ from which it follows that
$$0<r\leq\overline{d}(x, y_n)\leq\left({1\over n}-{1\over\lambda_n}\right)\to 0,$$ a contradiction.

{\it Claim 4.} Every $\overline{d}$-open ball is open in cone-metric-topology: this follows immediately from Claim 3. 

The proof of the theorem is now complete.
\end{proof}
Let us call an order-unit-metric space $(X, E, d)$ {\bf\emph{complete}} whenever the metric space $(X,\overline{d})$, with the metric $\overline{d}$ given in (\ref{eq2}), is complete. Note that the statement of Theorem \ref{thm8} contains implicitly the fact that different metrics given by different order-units are equivalent. Order-unit-metric spaces that are complete can be characterized as follows.
\begin{theorem}\label{thm9} Let $(X, E, d)$ be an order-unit-metric space. Then, the following are equivalent:
\begin{enumerate} 
\item[\textnormal{(i)}] $(X, E, d)$ is complete.
\item[\textnormal{(ii)}] If $(x_{n})$ is $\lll$-Cauchy then there exists $x\in X$ such that $(x_{n})\rightarrow x(\lll )$.
\end{enumerate} 
\end{theorem}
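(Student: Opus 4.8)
The plan is to reduce the whole equivalence to a single fact: a sequence in $X$ is $\lll$-Cauchy if and only if it is Cauchy for the metric $\overline{d}$ of (\ref{eq2}). Once this is established, the implications (i)$\Leftrightarrow$(ii) drop out formally from Theorem \ref{thm6} (which matches $\lll$-convergence with convergence in the order-unit-topology) and Theorem \ref{thm8} (which identifies the order-unit-topology with the $\overline{d}$-topology). Throughout I keep fixed the order unit $e$ used to build $\overline{d}$.

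First I would record the $\lll$-analogue of (\ref{eq0})(i): $(x_n)$ is $\lll$-Cauchy if and only if it is $\lll$-$e$-Cauchy. This follows verbatim from the cone argument, using that $e\leq\lambda u$ for every order unit $u$ (as $u\in\ou(E)$) and that $\ou(E)+E_+\included\ou(E)$, which is immediate. Hence it suffices to compare, for $z:=d(x_n,x_m)\geq 0$ and $\epsilon>0$, the two conditions $z\lll\epsilon e$ and $\overline{d}(x_n,x_m)=p(z)<\epsilon$. For the forward direction I would rescale Lemma \ref{new} (dividing by $\epsilon$ and invoking $(0,\infty)\,\ou(E)\included\ou(E)$): from $\epsilon e-z\in\ou(E)$ one gets $z<se$ for some $0<s<\epsilon$, and since $0\leq z\leq se$ gives $0\leq f(z)\leq s$ for every $f\in H_e$, we obtain $p(z)\leq s<\epsilon$. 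This is precisely the estimate already used in Claim 1 of the proof of Theorem \ref{thm8}.

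The reverse direction is where the representation genuinely enters. From $p(z)<\epsilon$ I need $z\leq\lambda e$ for some $\lambda<\epsilon$. Here I use that $\pi$ is an order isomorphism onto its image (Theorem \ref{thm7}), so that the states $H_e$ recover the cone of $E$: since $f(z)\leq|f(z)|\leq p(z)=p(z)f(e)$ for all $f\in H_e$, positivity reflects back to give $z\leq p(z)\,e$ in $E$. Choosing $s$ with $p(z)<s<\epsilon$ then yields $z\leq p(z)\,e<se$, hence $z<se$, and the rescaled Lemma \ref{new} returns $z\lll\epsilon e$. Thus the pointwise equivalence $z\lll\epsilon e\iff p(z)<\epsilon$ holds, whence $(x_n)$ is $\lll$-$e$-Cauchy precisely when it is $\overline{d}$-Cauchy, and therefore $\lll$-Cauchy and $\overline{d}$-Cauchy coincide.

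With this in hand the theorem is formal. By Theorem \ref{thm8} the order-unit-topology equals the topology of $\overline{d}$, and by Theorem \ref{thm6} a sequence $\lll$-converges to $x$ exactly when it $\overline{d}$-converges to $x$. For (i)$\Rightarrow$(ii): if $(X,\overline{d})$ is complete and $(x_n)$ is $\lll$-Cauchy, then it is $\overline{d}$-Cauchy, hence $\overline{d}$-converges to some $x$, so $x_n\to x\,(\lll)$. For (ii)$\Rightarrow$(i): any $\overline{d}$-Cauchy sequence is $\lll$-Cauchy, hence $\lll$-converges to some $x$ by (ii), hence $\overline{d}$-converges to $x$; thus every $\overline{d}$-Cauchy sequence converges and $(X,\overline{d})$ is complete. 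The step I expect to be the main obstacle is the reverse implication $p(z)<\epsilon\Rightarrow z\lll\epsilon e$, since it is the only place requiring that $\pi$ is order-reflecting and not merely order-preserving, i.e. that $-p(z)\,e\leq z\leq p(z)\,e$ for every $z\in E$; everything else is bookkeeping with Lemma \ref{new}, Theorem \ref{thm6}, and the definitions.
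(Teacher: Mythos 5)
Your proposal is correct and follows essentially the same route as the paper: both reduce the equivalence to the coincidence of $\lll$-Cauchy and $\overline{d}$-Cauchy sequences, using the rescaled Lemma \ref{new} in one direction and the representation of Theorem \ref{thm7} in the other, with Theorems \ref{thm6} and \ref{thm8} handling the convergence side. Your only real addition is to make explicit the order-reflecting step $z\leq p(z)\,e$ (via the states $H_e$ separating the cone), which the paper uses tacitly when it passes from $\overline{d}(x_n,x_m)<\epsilon^2$ to $d(x_n,x_m)<\epsilon^2 e$ --- a worthwhile clarification, but not a different proof.
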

\begin{proof} We will only prove that (i) implies (ii) as the proof of the reverse implication is quite similar; this, though, will be established by showing that $\lll$-Cauchy sequences in  $(X, E, d)$ and Cauchy sequences in $(X,\overline{d})$ do coincide. 

Let $(X, E, d)$ be a complete order-unit-metric space. Suppose that $(x_n)$ is $\lll$-Cauchy. Let $\epsilon > 0$ be given. Choose $k\in\mathbb{N}$ such that $d(x_n, x_m)\lll\epsilon e$ for all $n, m\geq k.$ Now for all $n, m\geq k$, since $\epsilon e-d(x_n, x_m)\in\ou (E)$, there exists a sequence $\lambda_{n, m}$ such that $e < \lambda_{n,m}(\epsilon e-d(x_n, x_m))$, which implies that 
$d(x_n, x_m)<\epsilon-1/\lambda_{n,m}<\epsilon$.  Hence $(x_{n})$ is Cauchy in $(X,\overline{d})$. 

Now suppose that  $(x_n)$ is Cauchy in $(X,\overline{d})$. Choose $k\in\mathbb{N}$ such that 
$$\overline{d}(x_n, x_m)<\epsilon^2={\mu-1\over\mu}\cdot\epsilon$$ for all $n, m\geq k$, where $\epsilon=(\mu-1)/\mu$. Now for $n, m\geq k$ , one has 
$$d(x_n, x_m)<\epsilon^2e={\mu-1\over\mu}(\epsilon e),$$ which implies 
$\epsilon e\leq\mu(\epsilon e-d(x_n, x_m))$. Since $e$ is an order unit, $\epsilon e-d(x_n, x_m)$ is an order unit as well, so $d(x_n, x_m)\lll\epsilon e$ for all $n, m\geq n_0$ for some $n_0\in\mathbb{N}$. Hence $(x_n)$ is $\lll$-$e$-Cauchy, and by (\ref{eq0}), it follows that $(x_n)$ is $\lll$-Cauchy, too.
\end{proof}
Let us call a cone metric space $(X, E, K, d)$ $\ll$-{\bf\emph{complete}} if every $\ll$-Cauchy sequence is $\ll$-convergent. With this in hand, the main result of \cite{kp} can be recovered in a more general setting as follows.
\begin{cor}\label{cor1} Every cone metric space $(X, E, K, d)$ is metrizable. In addition, if $(X, E, d)$ is complete, then $(X, E, K, d)$ is $\ll$-complete. 
\end{cor}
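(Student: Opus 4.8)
The plan is to reduce both assertions to the general order-unit-metric machinery of Theorems \ref{thm8} and \ref{thm9}, the essential preliminary step being to identify the cone-theoretic relation $\ll$ with the order-unit relation $\lll$. The key observation is that the ambient space $E$ of a cone metric space is a Krein space: it is an ordered Banach space with a closed cone $K=E_+$ and $\ic(K)\neq\emptyset$. By Theorem \ref{thm2}, the closedness of $K$ forces $K$ to be Archimedean, so $(X,E,d)$ is an Archimedean order-unit-metric space. Moreover, being a Banach space, $E$ is completely metrizable, so Theorem \ref{thm3} yields $\ic(K)=\ou(E)$. Consequently, for $x,y\in E$ one has $x\ll y$ (that is, $y-x\in\ic(K)$) if and only if $x\lll y$ (that is, $y-x\in\ou(E)$); the two relations coincide verbatim.

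First I would settle metrizability. Because $\ll$ and $\lll$ agree, the defining basic balls $B_{\ll}(x,r)$ of the cone metric topology and $B_{\lll}(x,r)$ of the order-unit-topology are literally the same sets, and the admissible radii ($0\ll r$ versus $0\lll r$) both range over $\ic(K)=\ou(E)$. Hence the cone metric topology and the order-unit-topology coincide; alternatively, one may simply invoke Theorem \ref{thm5}, which already exhibits a basis for the cone metric topology drawn from order-unit balls. Applying Theorem \ref{thm8} with any fixed $e\in\ou(E)$ then shows this common topology is metrizable, with metric $\overline{d}(x,y)=p(d(x,y))$ as in (\ref{eq2}).

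For the second assertion, assume $(X,E,d)$ is complete, that is, $(X,\overline{d})$ is a complete metric space. The coincidence of $\ll$ and $\lll$ propagates to the convergence notions built from them: a sequence is $\ll$-Cauchy precisely when it is $\lll$-Cauchy, and $x_n\to x\,(\ll)$ precisely when $x_n\to x\,(\lll)$. Thus, given an $\ll$-Cauchy sequence $(x_n)$, it is $\lll$-Cauchy, and the implication (i)$\Rightarrow$(ii) of Theorem \ref{thm9} provides $x\in X$ with $x_n\to x\,(\lll)$, equivalently $x_n\to x\,(\ll)$. This is exactly the $\ll$-completeness of $(X,E,K,d)$.

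The argument is in essence a dictionary translation, so I expect no genuinely hard step; the one point demanding care is the justification that $\ou(E)=\ic(K)$, which rests on the complete metrizability of the Banach space $E$ and is precisely where Theorem \ref{thm3} (hence the Kadison--Bonsall circle of ideas) enters. Everything else follows by noting that the two order relations, and therefore the topologies and the Cauchy and convergence concepts assembled from them, are identical in the Krein-space setting.
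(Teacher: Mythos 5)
Your proposal is correct and follows essentially the same route as the paper: reduce to the order-unit setting by showing $\ic(K)=\ou(E)$ (via Theorems \ref{thm2} and \ref{thm3}, using that $E$ is Banach, hence Hausdorff and completely metrizable), then invoke Theorem \ref{thm8} for metrizability and Theorem \ref{thm9} for $\ll$-completeness. In fact you spell out more carefully than the paper's one-line proof does where the Archimedean hypothesis of Theorem \ref{thm8} comes from and which theorem actually yields the coincidence of $\ll$ and $\lll$ (the paper's citation of Theorem \ref{thm8} for that coincidence is really doing the work of Theorems \ref{thm2} and \ref{thm3}).
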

\begin{proof} Since $E$ is a Banach space it is completely metrizable, so the cone metric space $(X, E, K, d)$ and the corresponding order-unit-metric space coincide by Theorem \ref{thm8}; so, by Theorem \ref{thm9}, $(X, E, K, d)$ is $\ll$-complete.
\end{proof}
\section{Remarks on Some Applications}
\noindent Finally, we will comment on some applications of cone metric spaces in the literature in the light of our considerations, where the metrizability of the order-unit-topology simplifies the corresponding results. 

\noindent{\bf (1)} One of the areas where cone metric spaces is mostly used is the fixed point theory, where contraction mappings are the key ingredient. Recall that a mapping $f : X\to X$, where $(X, d)$ is a metric space, is called a {\bf\emph{contraction}} if there exists $0<c<1$ such that 
$$d(f(x),f(y))\leq c\, d(x, y)$$ for all $x, y\in X$. It is readily seen that if $f : X\to X$ is a contraction for a cone metric space $(X, E, K, d)$, then $f$ is also a contraction for the metric space $(X, \overline{d})$ with the metric $\overline{d}$ in (\ref{eq2}). Relying on this observation, some basic facts that are true for metric spaces and are shown to be true for cone metric spaces with considerable effort can be directly obtained with the ultimate ease of order-unit-metric spaces. As an instance, the following is the main result of \cite{rh}: 
\begin{quote}
\emph{Let $(X, E, K, d)$ be a complete cone metric space and $T:X\to X$ be contractive with 
$$d(T(x),T(y))\leq k\, d(x,y)$$ for all $x, y\in X$, where $0\leq k<1$. Then, $T$ has a unique fixed point in $X$, and for each $x\in X$, the iterative sequence $(T^{n}(x))$ converges to the fixed point.}
\end{quote}

\noindent This is also the main result of \cite{HX} for $K$ normal, where a cone $K$ is called {\bf\emph{normal}} if there exists $k>0$ such that $0\leq x\leq y$ implies $\|x\|\leq k\,\|y\|$. Note that cone metric spaces with a normal cone are Archimedean since the ordered Banach space in Definition \ref{def1} admits an equivalent monotone norm; similarly, ordered Banach spaces with a closed cone are Archimedean as well$-$see, e.g., \cite[Theorem 2.38]{at}. In this case, using the metric $\overline{d}$, we have 
$$\overline{d}(T(x),T(y))\leq k\, \overline{d}(x,y)$$ for all $x, y\in X$. Thus, the above result is true for $E=\R$, which shows that the fixed point theorems concerning cone metric spaces follow from the usual fixed point theorem on complete metric spaces. \\

\noindent{\bf (2)} Observe that if 
$$d(a,b)\leq \sum_{i=1}^{n}d(a_{i},b_{i}),$$ where $(X, E, K, d)$ is a cone metric space, then clearly 
$$\overline{d}(a,b)\leq \sum_{i=1}^{n}\overline{d}(a_{i},b_{i})$$ holds. This implies that the corresponding results of the paper \cite{aj} can be shown to hold by taking the ordered Banach space $E=\R$ with $K=[0,{\infty})$; moreover, these results do not depend on the particular cone metric space.\\

\noindent{\bf (3)} The results of \cite{dr} do not depend on the particular cone metric space either: it is enough to use the metric space $(X, \overline{d})$ instead of the cone metric space $(X, E, K, d)$.\\

\noindent{\bf (4)} It is shown in \cite{so} that cone metric spaces with a normal cone are paracompact: regarding the metrizability of cone metric spaces, this is a direct consequence of the classical result that every metric space is paracompact \cite[Theorem 20.9]{wi}.\\



\begin{thebibliography}{15}
\bibitem{aj} M. Abbas \& G. Jungck, ``Common fixed point results for noncommuting mappings without continuity in cone metric spaces," \emph{J. Math. Anal. Appl.} {\bf 341} (2008), no.1, 416-420.
\bibitem{at} C.D. Aliprantis \& R. Tourky, \emph{Cones and Duality}, Graduate Studies in Mathematics, Vol. 84, American Mathematical Society, Providence, RI, 2007.
\bibitem{b} F.F. Bonsall, ``Sublinear functionals and ideals in partially ordered vector spaces," \emph{Proc. London Math. Soc. (3)} {\bf 4}, (1954), 402-418.
\bibitem{HX} L.-G. Huang \& X. Zhang, ``Cone metric spaces and fixed point theorems of contractive mappings," \emph{J. Math. Anal. Appl.} {\bf 332} (2007), no.2, 1468-1476.
\bibitem{dr} D. Ili\'{c}  \& V. Rako\v{c}evi\'{c}, ``Common fixed points for maps on cone metric space," \emph{ J. Math. Anal. Appl.} {\bf 341} (2008), no.2, 876-882. 
\bibitem{ja} G. Jameson, \emph{Ordered Linear Spaces}, Lecture Notes in Mathematics, Vol. 141, Springer-Verlag, Berlin-New York, 1970.
\bibitem{kadison} Richard V. Kadison, ``A representation theory for commutative topological algebra," \emph{Mem. Amer. Math. Soc.} (1951), no. 7, 39 pp.
\bibitem{kp} M. Khani \& M. Pourmadhian, ``On the metrizability of cone metric spaces," \emph{Topology Appl.} {\bf 158} (2011), no. 2, 190-193.
\bibitem{pe} Anthony L. Peressini, \emph{Ordered Topological Vector Spaces}, Harper \& Row, Publishers, New York-London, 1967.
\bibitem{rh} Sh. Rezapour \& R. Hamlbarani, ``Some notes on the paper: `Cone metric spaces and fixed point theorems of contractive mappings,' by L.-G. Huang and X. Zhang,'' \emph{J. Math. Anal. Appl.} {\bf 345} (2008), no. 2, 719-724.
\bibitem{so} A. S\"{o}nmez, ``On paracompactness in cone metric spaces," \emph{Appl. Math. Lett.} {\bf 23} (2010), no. 4, 494-497.
\bibitem{wi} S. Willard, \emph{General Topology}, Addison-Wesley Publishing Co., Reading, MA, 1970.
\end{thebibliography}
\end{document}